\newif\ifdraft\draftfalse
\newif\ifcite
\newif\ifblow
\ifcite\usepackage{showkeys}\else\usepackage[notcite,notref]{showkeys}\fi\fi
\numberwithin{equation}{section}
\theoremstyle{plain}
\newtheorem{proposition}[equation]{Proposition}
\newtheorem{theorem}[equation]{Theorem}
\newtheorem{lemma}[equation]{Lemma}
\newtheorem{conjecture}[equation]{Conjecture}
\newtheorem{query}[equation]{Question}
\newtheorem{corollary}[equation]{Corollary}
\theoremstyle{remark}
\theoremstyle{definition}
\theoremstyle{remark}
\newtheorem{remark}[equation]{Remark}
\DeclareMathOperator\Prym{\mathrm{Prym}}
\DeclareMathOperator\cPrym{\overline{\Prym}}
\DeclareMathOperator\IH{\mathrm{IH}}
\newcommand\arr{\ifinner \to\else\longrightarrow\fi}
\newcommand\arrto{\ifinner\mapsto\else\longmapsto\fi}
 \newcommand\rH{\mathrm{H}}
\newcommand\IC{\mathop{\mathrm{IC}}\nolimits}
\newcommand\checkp{\check{\mathbb{P}}}
\begin{document}
\bibliographystyle{plain}
\title[Perverse obstructions]{Perverse obstructions
to flat regular compactifications}
\author{Patrick Brosnan}

\begin{abstract}
  Suppose $\pi:W\to S$ is a smooth, proper morphism over a variety $S$
  contained as a Zariski open subset in a smooth, complex variety
  $\bar S$.  The goal of this note is to consider the question of when
  $\pi$ admits a regular, flat compactification.  In other words, when
  does there exists a flat, proper morphism
  $\bar\pi:\overline{W}\to\bar S$ extending $\pi$ with $\overline{W}$
  regular?  One interesting recent example of this occurs in the
  preprint~\cite{lsv} of Laza, Sacc\`a and Voisin where 
  $\pi$ is a family of abelian $5$-folds over a Zariski open subset
  $S$ of $\bar S=\mathbb{P}^5$.  In that paper, the authors construct $\overline{W}$
  using the theory of compactified Prym varieties and show that it is
  a holomorphic symplectic manifold (deformation equivalent to
  O'Grady's $10$-dimensional example).

  In this note I observe that non-vanishing of the local intersection
  cohomology of $R^1\pi_*\mathbb{Q}$ in degree at least $2$ provides
  an obstruction to finding a $\bar\pi$.  Moreover, non-vanishing in
  degree $1$ provides an obstruction to finding a $\bar\pi$ with
  irreducible fibers.   Then I observe that, in some cases of interest,
results of Brylinski, Beilinson and Schnell can be used to compute
the intersection cohomology~\cite{BeilinsonRadon, BrylinskiRadon, SchRes}. 
I also give examples involving cubic $4$-folds
  motivated by~\cite{lsv} and ask a question about palindromicity
  of hyperplane sections. 
\end{abstract}
\maketitle

\section{Introduction}\label{intro}

Let $\bar S$ denote a smooth, quasi-projective, complex (irreducible)
variety of dimension $d$, and let $S$ denote a non-empty Zariski open
subset of $\bar S$.  Suppose $\pi: W\to S$ is a smooth, proper
morphism of relative dimension $n$.  I will call an irreducible,
regular scheme $\overline{W}$ equipped with a proper morphism $\bar\pi:\overline{W}\to \bar{S}$ a \emph{regular compactification} of $\pi$ if
\begin{enumerate}
\item $\overline{W}$ contains $W$ as a Zariski dense open subset;
\item the restriction of $\bar\pi$ to $W$ is $\pi$.  
\end{enumerate}

\begin{query}\label{q0}
  Under what conditions can we find a regular compactification
  $\bar\pi:\overline{W}\to \bar S$ of $\pi$ which is flat over $\bar S$.
  Also, under what conditions can we find a $\bar\pi:\overline{W}\to \bar S$
  as above with irreducible fibers?
\end{query}

My goal in this note is to write down some necessary conditions in
terms of local intersection cohomology.  For this, let $j:S\to\bar S$
denote the inclusion of $S$ in $\bar S$.  Pick an integer $k$
and set $\mathbf{L}=R^k\pi_*\mathbb{Q}$.  
Then the intersection complex $\IC\mathbf{L}$ 
is a polarizable Hodge module on $\bar S$ with underlying perverse
sheaf given by the intermediate extension of the underlying local system
$L$ to $\bar S$.
The intersection complex is also called the IC complex and is also
written as $j_{!*} \mathbf{L}[d]$.  The underlying perverse
sheaf is a complex of sheaves with
cohomology in the interval $[-d,0)$.  The local intersection
cohomology of $\mathbf{L}$ at a point $s\in\bar S$ is
\begin{equation}\label{licdef}
\IH^j_s\mathbf{L}:=H^{j-d}(\mathbf{L})_s.
\end{equation}
So $\IH^j_s\mathbf{L}$ is the $(j-d)^{\text{th}}$ cohomology of the
stalk of $\IC \mathbf{L}$ at $s$.  Clearly $\IH^j_s\mathbf{L}=0$
unless $j\in [0,d)$.  Moreover, $\IH^0_s \mathbf{L}$ is the space of
local invariants of $\mathbf{L}$ at $s$.  So,
$\IH^0_s\mathbf{L}$ is the fiber $\mathbf{L}_s$ for $s\in S$.  At points
$s\in \bar S$, $\IH^0_s\mathbf{L}=\Gamma(B\cap S, \mathbf{L})$ for a
sufficiently small ball $B$ in $\bar S$ containing $s$.

The following theorem, which I believe is a well-known
consequence of the decomposition theorem of Beilinson, Bernstein
and Deligne~\cite{BBD}, gives a way to obtain information about
possible compactifications $\bar\pi$ from the topology of $\pi$.  For the convenience of the reader
I will prove it in Section~\ref{proof}.

\begin{theorem}\label{gdf}
  Suppose $\bar\pi:\overline{W}\to \bar S$ is a regular compactification 
of $\pi$.  Then 
\begin{enumerate}
\item the complex 
$\oplus_{i} \IC(R^{n+i}\pi_*\mathbb{Q})[-i]$
includes in $R\bar\pi_*\mathbb{Q}[d+n]$ 
as a direct factor;
\item  for every integer $m$ and each point $s\in\bar S$,
  $\oplus_{j+k=m} \IH^{j}_s(R^k\pi_*\mathbb{Q})$ includes as a 
direct factor in the cohomology group $\rH^{m}(\bar W_s,\mathbb{Q})$ of the fiber of $\bar\pi$ over $s$.  
\item If the inclusion in (i) is an isomorphism, then so is the inclusion in (ii).   
\end{enumerate}
\end{theorem}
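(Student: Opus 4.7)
The plan is to apply the Beilinson--Bernstein--Deligne decomposition theorem to the proper morphism $\bar\pi:\overline{W}\to\bar{S}$. Since $\overline{W}$ is regular and irreducible of dimension $d+n$, the shifted constant sheaf $\mathbb{Q}_{\overline{W}}[d+n]$ agrees with the IC complex $\IC_{\overline{W}}$ and is a simple pure perverse sheaf. The BBD theorem then supplies a (non-canonical) splitting
\[
R\bar\pi_*\mathbb{Q}_{\overline{W}}[d+n]\;\cong\;\bigoplus_i \upp\hh^i\bigl(R\bar\pi_*\mathbb{Q}_{\overline{W}}[d+n]\bigr)[-i],
\]
in which each perverse cohomology sheaf is a finite direct sum of IC sheaves of simple local systems on smooth locally closed subvarieties of $\bar S$.

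Next I would restrict to the open subset $S\subset\bar S$, where $\bar\pi=\pi$. Deligne's degeneration theorem for smooth proper morphisms gives $R\pi_*\mathbb{Q}_W[d+n]\cong\bigoplus_k R^k\pi_*\mathbb{Q}[d+n-k]$, so the $i$-th perverse cohomology on $S$ is precisely the perverse local system $R^{n+i}\pi_*\mathbb{Q}[d]$. Because the IC extension of a simple perverse local system on $S$ is a simple perverse sheaf on $\bar S$, uniquely determined by its restriction to $S$, the summands of $\upp\hh^i(R\bar\pi_*\mathbb{Q}[d+n])$ whose support meets $S$ must assemble into a single copy of $\IC(R^{n+i}\pi_*\mathbb{Q})$, while all other simple summands are supported on $\bar S\setminus S$. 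This yields
\[
R\bar\pi_*\mathbb{Q}_{\overline{W}}[d+n]\;\cong\;\bigoplus_i \IC(R^{n+i}\pi_*\mathbb{Q})[-i]\,\oplus\,Q,
\]
with $\supp Q\subset\bar S\setminus S$, establishing (i).

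For (ii), I would fix $s\in\bar S$ and take the stalk at $s$, using proper base change to identify $(R\bar\pi_*\mathbb{Q})_s$ with $R\Gamma(\bar W_s,\mathbb{Q})$. Extracting the $(m-d-n)$-th cohomology of the direct sum decomposition, applying the paper's definition $\IH^j_s\mathbf{L}=H^{j-d}(\IC\mathbf{L})_s$, and reindexing via $k=n+i$, $j=m-k$, identifies the relevant direct summand of $\rH^m(\bar W_s,\mathbb{Q})$ with $\bigoplus_{j+k=m}\IH^j_s(R^k\pi_*\mathbb{Q})$. Part (iii) then follows at once: if $Q=0$ in (i), the stalk decomposition has no further contributions, so the inclusion in (ii) is forced to be an equality.

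The principal obstacle is the summand identification in the second paragraph. One must know that the intermediate extension of a simple perverse local system on $S$ remains simple on $\bar S$, so that $\IC(R^{n+i}\pi_*\mathbb{Q})$ appears exactly once among the simple summands of $\upp\hh^i(R\bar\pi_*\mathbb{Q}[d+n])$ whose supports meet $S$, and that any further simple summand with support meeting $S$ would restrict to a nontrivial local system contradicting Deligne's degeneration. Once this semisimplicity input from BBD is in place, parts (ii) and (iii) reduce to bookkeeping with perverse shifts and proper base change.
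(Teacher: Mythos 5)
Your proposal is correct and follows essentially the same route as the paper: apply the BBD/Saito decomposition theorem to $\bar\pi$, identify the restriction of the perverse cohomology sheaves to $S$ with $R^{n+i}\pi_*\mathbb{Q}[d]$ via Deligne degeneration so that each $\IC(R^{n+i}\pi_*\mathbb{Q})$ splits off, and then take stalks at $s$ using proper base change. The extra detail you supply about simple summands and supports meeting $S$ is a correct elaboration of the step the paper states more briefly.
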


\begin{corollary}\label{CorOb}
If 
a flat, regular compactification $\bar\pi$ of $\pi$ exists, then, for all
$s\in \bar{S}$ and all integers $j,k$ with $j+k>2n$,  
$\IH^j_s(R^k\pi_*\mathbb{Q})=0$.  If the fibers of $\bar\pi$ are 
irreducible, then $\IH^0(R^{2n}\pi_*\mathbb{Q})=\mathbb{Q}$
and the groups $\IH^j_s(R^{2n-j}\pi_*\mathbb{Q})$  vanish for $j>0$.
\end{corollary}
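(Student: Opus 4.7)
The plan is to combine Theorem~\ref{gdf}(ii) with elementary facts about the fiber cohomology of $\bar\pi$.

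For the first assertion, I would first use that $\overline W$ is irreducible and regular (hence equidimensional of dimension $d+n$) together with flatness of $\bar\pi$ to conclude that every fiber $\bar W_s$ has pure dimension $n$. Since a complex variety of dimension $n$ has vanishing rational cohomology above degree $2n$, we get $\rH^m(\bar W_s, \mathbb{Q}) = 0$ for $m > 2n$. Applying Theorem~\ref{gdf}(ii), the direct sum $\oplus_{j+k=m} \IH^j_s(R^k\pi_*\mathbb{Q})$ is then a direct summand of a vanishing group, so each individual term vanishes, giving the first claim.

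For the second assertion, irreducibility of $\bar W_s$ together with its pure dimension $n$ gives $\rH^{2n}(\bar W_s, \mathbb{Q}) = \mathbb{Q}$ (the class of a point). Moreover, for $s \in S$ the fiber $\pi^{-1}(s) = \bar\pi^{-1}(s) \cap W$ is a non-empty open subset of $\bar W_s$ (openness because $W$ is open in $\overline W$; non-emptiness because the smooth $\pi$ is an open map with closed image in the connected $S$, hence surjective) which is also closed in $\bar W_s$ (by properness of $\pi$); irreducibility of $\bar W_s$ then forces $\pi^{-1}(s) = \bar W_s$. The smooth fibers of $\pi$ are therefore connected, which forces $R^{2n}\pi_*\mathbb{Q} = \mathbb{Q}_S$; its intermediate extension to the smooth variety $\bar S$ of dimension $d$ is just $\mathbb{Q}_{\bar S}[d]$, and consequently $\IH^0_s(R^{2n}\pi_*\mathbb{Q}) = \mathbb{Q}$ at every $s \in \bar S$.

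Combining these two observations with Theorem~\ref{gdf}(ii) at $m = 2n$, the direct sum $\oplus_{j+k=2n} \IH^j_s(R^k\pi_*\mathbb{Q})$ is a direct summand of the one-dimensional space $\mathbb{Q}$ that already contains the one-dimensional subspace $\IH^0_s(R^{2n}\pi_*\mathbb{Q})$; hence all other summands $\IH^j_s(R^{2n-j}\pi_*\mathbb{Q})$ with $j > 0$ must vanish. The whole argument is essentially bookkeeping once Theorem~\ref{gdf}(ii) is in hand; the only non-trivial input is the fiber identification $\pi^{-1}(s) = \bar W_s$ for $s \in S$, which is where the irreducibility hypothesis on the fibers of $\bar\pi$ genuinely enters.
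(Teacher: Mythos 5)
Your proof is correct and follows essentially the same route as the paper: flatness gives $n$-dimensional fibers, hence vanishing of $\rH^m(\overline{W}_s,\mathbb{Q})$ for $m>2n$, and then Theorem~\ref{gdf}(ii) does the rest, with irreducibility giving $\rH^{2n}(\overline{W}_s,\mathbb{Q})=\mathbb{Q}$. The only (harmless) difference is that you pin down $R^{2n}\pi_*\mathbb{Q}=\mathbb{Q}_S$ exactly via the identification $\pi^{-1}(s)=\overline{W}_s$ for $s\in S$, whereas the paper only needs that the constant sheaf is a direct factor of $R^{2n}\pi_*\mathbb{Q}$, so that $\dim\IH^0_s\geq 1$ and the one-dimensionality of $\rH^{2n}(\overline{W}_s,\mathbb{Q})$ forces the remaining summands to vanish.
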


\begin{proof}
If $\bar\pi$ is flat, then $\dim \overline{W}_s=n$ for all $s\in\bar{S}$.
So $\rH^{m}(\overline{W}_s,\mathbb{Q})=0$ for $m>2n$.  Thus Theorem~\ref{gdf} (ii)
implies the first statement.   If the fiber $\overline{W}_s$ is 
irreducible, then $\rH^{2n}\overline{W}_s=\mathbb{Q}$.  The
constant sheaf is a direct factor in $R^{2n}\pi_*\mathbb{Q}$.  So 
$\dim\IH^0_s(R^{2n}\pi_*\mathbb{Q})\geq 1$ for all $s\in\bar S $. 
The rest of Corollary~\ref{CorOb} is now immediate from Theorem~\ref{gdf}.
\end{proof}

In writing this note, I was mainly motivated by a recent preprint of
Laza, Sacc\`a and Voisin which concerns the situation where 
$\pi:A\to S$ is an abelian scheme of relative dimension $n$~\cite{lsv}.  In
this case, set $\mathbf{H}:= R^1\pi_*\mathbb{Q}(1)$.  It is a
polarized variation of Hodge structure of weight $-1$ on $S$, which,
is isomorphic to $R^{2n-1}\pi_*\mathbb{Q}(n)$ by Hard Lefschetz.  We
get the following.

\begin{corollary}\label{ihvan}
  Set $k=\max\{j: \IH^j_s\mathbf{H}\neq 0\}$.  Suppose a flat regular compactification
  $\bar\pi:\bar A\to \bar S$ of $\pi$ exists.  Then $k\leq 1$.   If $\bar A_s$ is
  irreducible, then $k=0$. 
\end{corollary}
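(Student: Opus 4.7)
The plan is to derive Corollary~\ref{ihvan} as an immediate consequence of Corollary~\ref{CorOb}, via the Hard Lefschetz isomorphism $\mathbf{H} \cong R^{2n-1}\pi_*\mathbb{Q}(n)$ recorded just before the statement. Since the Tate twist $(n)$ alters only the Hodge and weight data and leaves the underlying $\mathbb{Q}$-local system (and hence its intermediate extension and the stalks of its cohomology sheaves) unchanged, the isomorphism induces
\[
\IH^j_s \mathbf{H} \;\cong\; \IH^j_s(R^{2n-1}\pi_*\mathbb{Q})
\]
for every $j \geq 0$ and every $s \in \bar S$. The problem therefore reduces to bounding the local intersection cohomology of $R^{2n-1}\pi_*\mathbb{Q}$.

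For the bound $k \leq 1$, I would feed $k' = 2n - 1$ into the first half of Corollary~\ref{CorOb}: the condition $j + k' > 2n$ becomes $j \geq 2$, so the corollary yields $\IH^j_s(R^{2n-1}\pi_*\mathbb{Q}) = 0$ for every $j \geq 2$, and the displayed isomorphism upgrades this to $\IH^j_s \mathbf{H} = 0$ for every $j \geq 2$.

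For the bound $k \leq 0$ under the additional assumption that $\bar A_s$ is irreducible, I would invoke the second half of Corollary~\ref{CorOb}, which asserts that $\IH^j_s(R^{2n-j}\pi_*\mathbb{Q}) = 0$ for every $j > 0$. Specializing to $j = 1$ gives $\IH^1_s(R^{2n-1}\pi_*\mathbb{Q}) = 0$, which, combined with the previous paragraph, forces $\IH^j_s \mathbf{H} = 0$ in every degree $j \geq 1$.

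The whole derivation is essentially formal, so there is no serious obstacle. The one subtlety worth flagging is the need to confirm that the Hard Lefschetz map, which is a morphism of polarizable variations of Hodge structure on $S$, extends to an isomorphism on intermediate extensions over $\bar S$; this is automatic because Hard Lefschetz is induced by cup product with a global polarization class on $S$ and the intermediate extension $j_{!*}$ is a functor on the category of polarizable Hodge modules, compatible with Tate twists.
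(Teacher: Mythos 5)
Your proposal is correct and follows exactly the paper's route: the paper likewise deduces Corollary~\ref{ihvan} by applying Corollary~\ref{CorOb} to $R^{2n-1}\pi_*\mathbb{Q}$ and transferring the vanishing to $\mathbf{H}$ via the Hard Lefschetz identification. The extra care you take about the Tate twist and the extension of Hard Lefschetz to intermediate extensions is harmless but not needed, since only the isomorphism of underlying $\mathbb{Q}$-local systems over $S$ is used.
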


\begin{proof}  This follows directly from Corollary~\ref{CorOb}
  applied to $R^{2n-1}\pi_*\mathbb{Q}$.   
\end{proof}

Suppose $X$ is a smooth, closed, $2m$-dimensional subvariety of
$P:=\mathbb{P}^N$ for some positive integers $m$ and $N$.  By cutting $X$
with hyperplanes, we get a family $\mathcal{X}\to P^{\vee}$  over
the dual projective space, which is smooth over a Zariski dense
open subset $U\subset P^{\vee}$.  (See \S\ref{Beil}.)  Set $n:=2m-1$ so that the
general member of the family $\mathcal{X}\to P^{\vee}$ is, by Bertini, a smooth
$n$-dimensional variety.  We get a variation of Hodge
structure $\mathbf{H}_{\mathbb{Z}}$ over $U$ such that the fiber over $H\in P^{\vee}$ 
is $\rH^n(X\cap H,\mathbb{Z}(m))$.   Let $J(\mathbf{H}_{\mathbb{Z}})\to U$ denote
the family of Griffiths intermediate Jacobians of $\mathbf{H}_{\mathbb{Z}}$.  In very
special cases, it turns out to be an abelian scheme.   Write $\mathbf{H}$ for the $\mathbb{Q}$-variation of Hodge structure obtained by tensoring 
$\mathbf{H}_{\mathbb{Z}}$ with $\mathbb{Q}$. 
In Section~\ref{Beil}, I will prove the following theorem (which,
along with Corollary~\ref{PalCor}, assumes the notation of the
preceding paragraph).

\begin{theorem}\label{BCor}
  Suppose that $\mathbf{H}$ is non-constant.   Let $H\in\checkp^N$ be a hyperplane
  and write $Y:=H\cap X$ for the hyperplane section.  Write
  $b_k Y:=\dim \rH^k(Y,\mathbb{Q})$ for the $k$-th Betti number. Then, for $k>0$,
  $$b_{n+k} Y - b_{n-k} Y = \dim \IH^k_H \mathbf{H}.$$
\end{theorem}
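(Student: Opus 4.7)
The plan is to apply the decomposition theorem to $\pi:\mathcal{X}\to\checkp^N$, invoke the Radon-transform results of Brylinski, Beilinson and Schnell to pin down the summands, and read off $H^*(Y,\mathbb{Q})$ via proper base change at the point $H$.

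The key step is establishing the decomposition
$$R\pi_*\mathbb{Q}_{\mathcal{X}}[n+N]\;\cong\;\bigoplus_{j\in\mathbb{Z}}\IC\bigl(R^j\pi_*\mathbb{Q}|_U\bigr)[n-j].$$
The decomposition theorem for the proper morphism $\pi$ from the smooth variety $\mathcal{X}$ provides \emph{some} splitting of $R\pi_*\mathbb{Q}_{\mathcal{X}}[n+N]$ into shifts of IC sheaves, but a priori there could be summands supported on proper closed subvarieties of $\checkp^N$. The content of the theorem of Brylinski, Beilinson and Schnell is precisely that no such extraneous summands occur for the universal hyperplane section morphism: every simple constituent is the IC sheaf of a local system on a Zariski open subset of $\checkp^N$. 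Since $\pi|_U$ is smooth of relative dimension $n$, this forces the displayed form of the decomposition. The constituent local systems are easy to identify: by the Lefschetz hyperplane theorem, $R^j\pi_*\mathbb{Q}|_U$ is the constant sheaf $H^j(X,\mathbb{Q})\otimes\mathbb{Q}_U$ for $j<n$; by Poincar\'e duality and Hard Lefschetz on the smooth fibers, the same is true (up to Tate twist, with fiber $H^{2n-j}(X,\mathbb{Q})$) for $j>n$; and for $j=n$, one has $R^n\pi_*\mathbb{Q}|_U\cong H^n(X,\mathbb{Q})\otimes\mathbb{Q}_U\oplus\mathbf{V}$, where $\mathbf{V}$ is the non-constant vanishing cohomology and $\mathbf{H}\cong R^n\pi_*\mathbb{Q}|_U(m)$.

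Proper base change gives $H^m(Y,\mathbb{Q})=H^m(R\pi_*\mathbb{Q})_H$, and unwinding the shifts in the above decomposition yields
$$H^m(Y,\mathbb{Q})\;\cong\;\bigoplus_{j}\IH^{m-j}_H\bigl(R^j\pi_*\mathbb{Q}|_U\bigr).$$
For a constant local system on $\checkp^N$, the local intersection cohomology is nonzero only in degree $0$, so when $k>0$ only the terms $j=m$ and $j=n$ can contribute to $H^m(Y,\mathbb{Q})$. Taking $m=n-k$, the $j=n$ term is $\IH^{-k}_H=0$, so $H^{n-k}(Y,\mathbb{Q})\cong H^{n-k}(X,\mathbb{Q})$. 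Taking $m=n+k$, the constant $j=n+k$ term contributes $H^{n-k}(X,\mathbb{Q})$ (via the Hard Lefschetz identification above), while the $j=n$ term contributes $\IH^k_H\mathbf{V}=\IH^k_H\mathbf{H}$ (the constant summand $H^n(X,\mathbb{Q})$ of $\mathbf{H}$ is invisible to $\IH^k$ when $k>0$). Subtracting dimensions yields the theorem.

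The main obstacle is the first step: the assertion that all IC summands of $R\pi_*\mathbb{Q}_{\mathcal{X}}[n+N]$ have full support in $\checkp^N$ is the genuinely non-elementary input, supplied by the Radon-transform papers cited in the introduction. Once that is in hand, the rest is a bookkeeping exercise with proper base change and the Lefschetz theorems. The non-constancy hypothesis on $\mathbf{H}$ is not strictly needed for the computation---if $\mathbf{V}=0$ both sides of the identity vanish---but it is the regime in which the formula has genuine content.
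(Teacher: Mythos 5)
Your proposal is correct and is essentially the paper's own argument: the Brylinski--Beilinson--Schnell full-support decomposition of $Rp_*\mathbb{Q}$ for the universal hyperplane family, constancy of the non-middle local systems $R^{j}p_{U*}\mathbb{Q}$ via weak and Hard Lefschetz, and proper base change at $H$ to read off $\rH^{n\pm k}(Y,\mathbb{Q})$ stalkwise before subtracting. The only slip is your closing aside: the non-constancy of $\mathbf{H}$ cannot be dropped, because the full-support theorem genuinely requires it---for instance, if $X$ is a smooth quadric (so $\mathbf{H}=0$) and $Y$ is a nodal hyperplane section, then $b_{n+1}Y-b_{n-1}Y=1\neq 0=\dim\IH^1_H\mathbf{H}$, the discrepancy coming from a summand of $Rp_*\mathbb{Q}$ supported on the dual quadric.
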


Call $Y$ \emph{palindromic} (resp. \emph{weakly palindromic})
if $b_{n+k} Y=b_{n-k} Y$ for all $k$ (resp. for all $k>1$).

\begin{corollary}\label{PalCor}
  Suppose that $\pi:J(\mathbf{H}_{\mathbb{Z}})\to U$ is a non-constant abelian scheme
  admitting a flat, regular compactification $\bar\pi:\bar J\to P^{\vee}$.
  Fix $H\in P^{\vee}$ and set $Y=X\cap H$.
  Then $Y$ is weakly palindromic.
If 
 the fiber of $\bar\pi$ over $H\in P^{\vee}$
  is irreducible, then  
  $Y$ is a palindromic  
\end{corollary}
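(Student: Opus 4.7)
The plan is to derive Corollary~\ref{PalCor} by directly combining Corollary~\ref{ihvan} with Theorem~\ref{BCor}; the deduction is essentially formal and the main task is to align the hypotheses.

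First, I would apply Corollary~\ref{ihvan} to the flat regular compactification $\bar\pi:\bar J\to P^\vee$ of the abelian scheme $\pi:J(\mathbf{H}_{\mathbb{Z}})\to U$, taking $\bar S=P^\vee$ and $S=U$. The non-constancy hypothesis on $\mathbf{H}$ needed later for Theorem~\ref{BCor} follows from the assumption that $J(\mathbf{H}_{\mathbb{Z}})\to U$ is itself non-constant. Corollary~\ref{ihvan} then yields the vanishing $\IH^j_H\mathbf{H}=0$ for every $H\in P^\vee$ and every $j\geq 2$; and at any point $H$ where the fiber $\bar J_H$ is irreducible it yields the stronger vanishing $\IH^j_H\mathbf{H}=0$ for all $j\geq 1$.

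Next, I would feed these vanishings into Theorem~\ref{BCor}, which asserts that
\[
b_{n+k}Y-b_{n-k}Y=\dim\IH^k_H\mathbf{H}\qquad\text{for every }k>0.
\]
For $k\geq 2$ the right-hand side is zero by the first vanishing, which is precisely the statement that $Y$ is weakly palindromic. When $\bar J_H$ is irreducible the right-hand side vanishes also for $k=1$, so the equality $b_{n+k}Y=b_{n-k}Y$ holds for every $k>0$, and combined with the trivial case $k=0$ this gives full palindromicity of $Y$.

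I do not expect a significant obstacle in carrying this out: the only minor subtlety is that Corollary~\ref{ihvan} and Theorem~\ref{BCor} must be invoked at arbitrary points $H\in P^\vee$, including those in the discriminant locus $P^\vee\setminus U$. Both statements are formulated for all $H\in P^\vee$, so no additional work is required beyond verifying that the notation for $\mathbf{H}$ (defined a priori as a variation on $U$) is being consistently interpreted via the local intersection cohomology of its intermediate extension to $P^\vee$, as set up in~\eqref{licdef}.
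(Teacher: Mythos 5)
Your proposal is correct and follows the paper's own proof essentially verbatim: the paper likewise deduces Corollary~\ref{PalCor} by combining Corollary~\ref{ihvan} (applied via $R^1\pi_*\mathbb{Q}(1)\cong\mathbf{H}$) with the Betti-number identity of Theorem~\ref{BCor}. Your added remarks on non-constancy and on interpreting $\IH^k_H\mathbf{H}$ at points of the discriminant are consistent with the paper's setup and require no further justification.
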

\begin{proof}
  Since $R^1\pi_*\mathbb{Q}(1)\cong \mathbf{H}$, Corollary~\ref{PalCor} follows from 
Theorem~\ref{BCor} and Corollary~\ref{ihvan}.
\end{proof}

In~\cite{lsv}, the authors produce a flat, regular compactification $\bar A$ of a
family $\pi:A\to U$ of abelian $5$-folds over an open subset of $\mathbb{P}^5$.
In Section~\ref{exs}, I will
give examples where
Corollary~\ref{PalCor} can be used to rule out the existence of a
flat, regular compactification, or a flat, regular compactification with
irreducible fibers.   I will also give a consequence (Corollary~\ref{PalCor2})
of the main result
of~\cite{lsv} and state a conjecture (Conjecture~\ref{PalConj}) 
about palindromicity partially motivated by the results of~\cite{lsv}.

\subsection*{Acknowledgments}
This work was made possible by an NSF Focused Research Project on Hodge
theory and moduli held in collaboration with M.~Kerr, Laza, G.~Pearlstein
and C.~Robles. 
In fact, 
the note itself began as an email to R.~Laza
(dated August 22, 2016).  I thank the FRG members for
encouragement and useful conversations.

I also thank M.~Nori for giving me a lot of help with \S\ref{exs}, and
B.~Klingler for inviting me and Nori to Paris Diderot during the
Summer 2016.  I  thank J.~Achter for telling me about the tables
in M.~Rapoport's paper~\cite{RapDel}, and N.~Fakhruddin for advice on
how to improve the exposition.

The interaction between palindromicity and intersection cohomology
comes up in a similar way to the way it is used here
in my joint paper~\cite{BrCh} written with T.~Chow.  I thank Chow for
many conversations about the notion of palindromicity.

\section{Proof of Theorem~\ref{gdf}}\label{proof}

\begin{proof}
  Let $d$ denote the dimension of $\bar S$ and let $n$ denote the
  dimensions of the fibers of $\pi$.  By the decomposition theorems of
  Beilinson---Bernstein---Deligne~\cite{BBD} and Saito~\cite{MHP}, we
  have
  $R\bar\pi_*\mathbb{Q}[d+n]= \oplus_{i\in\mathbb{Z}}
  \mathcal{F}_i[-i]$ where the $\mathcal{F}_i$ are direct sums of
  intersection complexes coming from local systems on various strata.
  The restriction of $\mathcal{F}_i$ to $S$ is equal to
  $R^{n+i}\pi_*\mathbb{Q}[d]$.  So each $\mathcal{F}_i$ contains
  $\IC(R^{n+i}\pi_*\mathbb{Q})$ as a direct factor, and this implies
  Theorem~\ref{gdf} (i).  

  Let $\iota:\{s\}\to \bar S$ denote the inclusion of the point $s$.
  By proper base change, we have
  $\rH^m(\overline{W}_s,\mathbb{Q})=H^{m}(\iota^*R\bar\pi_*\mathbb{Q})
  =H^{m-d-n}(\iota^*R\bar\pi_*\mathbb{Q}[d+n])=\oplus_i
  H^{m-d-n}(\iota^*\mathcal{F}_i[-i])$.  This vector space contains as
  a direct factor the space
  \begin{align*}
\oplus_i H^{m-d-n}(\iota^*\IC(R^{n+i}\pi_*\mathbb{Q})[-i])
  &=\oplus_i H^{m-d-n-i}(\iota^*\IC(R^{n+i}\pi_*\mathbb{Q}))\\
  &= \oplus_i \IH^{m-n-i}(R^{n+i}\pi_*\mathbb{Q}))\\
  &= \oplus_{j+k=m} \IH^{j}(R^k\pi_*\mathbb{Q}).
  \end{align*}
  Moreover, if the inclusion in (i) is an isomorphism, the two spaces are equal.
  This proves (ii) and (iii).
\end{proof}

\section{Proof of Theorem~\ref{BCor}}\label{Beil}

Now we fix the notation from the introduction that $X$ is a smooth $2m$
dimensional closed subvariety of $P=\mathbb{P}^N$ and $P^{\vee}$ is the dual
projective space.   Let $\mathcal{X}:=\{(x,H)\in X\times P^{\vee}:
x\in H\}$ denote the incidence variety.   Write $q$ and $p$ for the
projections on the first and second factors respectively.  Then $q$
is a $\mathbb{P}^{N-1}$-bundle.  So $\mathcal{X}$ is smooth and irreducible
of dimensions $d_{\mathcal{X}}=n+N$ with $n=2m-1$.
On the other hand, the fiber of $p:\mathcal{X}\to P^{\vee}$ over a hyperplane
$H$ is the hyperplane section $Y_H:=H\cap X$.    Write $U$ for the locus
of hyperplanes $H$ such that $Y_H$ is smooth, and set $\mathcal{X}_U=p^{-1}(U)$.
Then the restriction of $p$ to $\mathcal{X}_U$ gives a smooth, proper morphism
$p_U:\mathcal{X}_U\to U$.

Set $\mathbf{H}:=R^{n}p_{U*}\mathbb{Q}(m)$.
This is a weight $-1$ variation of pure Hodge structure on $U$.  By
weak Lefschetz, it follows that the sheaves $R^{n-k}p_{U*}\mathbb{Q}$ are constant
for $k>0$.  In fact they are the constant sheaves given by $\rH^{n-k}(X,\mathbb{Q})$.
Then, Hard Lefschetz shows that, for $k>0$,  $R^{n+k}p_{U*}\mathbb{Q}(k)\cong
R^{n-k}p_{U*}\mathbb{Q}$.   By Deligne's degeneracy theorem~\cite{DelDeg}, $Rp_{U*}\mathbb{Q}(n)=\oplus_k R^kp_{U*}\mathbb{Q}[-k]$.  So $Rp_{U*}\mathbb{Q}(n)$ is a direct sum
of shifted constant sheaves and $\mathbf{H}[-n]$.  

The following theorem, which is Theorem C of C.~Schnell's
paper~\cite{SchRes}, shows that an analogous decomposition holds
on the level of
$Rp_*\mathbb{Q}$ provided that $\mathbf{H}$ is non-constant.
As explained by Beilinson in~\cite{BeilinsonRadon}, the result is also
a direct consequence of a much older paper of Brylinski on the Radon
transform and perverse sheaves~\cite{BrylinskiRadon}. 

\begin{theorem}[Beilinson, Brylinski, Schnell]
\label{BeiSch} Suppose $\mathbf{H}$ is non-constant.
  Then
\begin{align*}
     Rp_*\mathbb{Q}[d_{\mathcal{X}}]&=\bigoplus_k \IC(R^{n+k}p_{U*}\mathbb{Q})[-k]\\
      &=\IC(\mathbf{H})\oplus \bigoplus_{k\neq 0} \IC(R^{n+k}p_{U*}\mathbb{Q})[-k].
\end{align*}
In particular, $Rp_*\mathbb{Q}[d_{\mathcal{X}}]$ is the direct sum of
  $\IC(\mathbf{H})$ and (shifted) constant sheaves on $P^{\vee}$.
\end{theorem}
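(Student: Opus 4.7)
The plan is to combine the Beilinson--Bernstein--Deligne (BBD) decomposition theorem with a theorem on Radon transforms due to Brylinski (and reformulated by Beilinson and by Schnell). The argument splits naturally into two parts: identifying the generic-stratum contributions to the decomposition of $Rp_*\mathbb{Q}[d_\mathcal{X}]$, and then showing that no extra summand arises with support in the dual variety $X^\vee := P^\vee \setminus U$.

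First I would apply the decomposition theorem to the proper morphism $p:\mathcal{X}\to P^\vee$, valid since $\mathcal{X}$ is smooth, to obtain $Rp_*\mathbb{Q}[d_\mathcal{X}]\cong\bigoplus_k\mathcal{F}_k[-k]$ with each $\mathcal{F}_k$ a semisimple perverse sheaf on $P^\vee$. Restriction to $U$ is controlled by Deligne's degeneration theorem for the smooth proper map $p_U$: one has $\mathcal{F}_k|_U = R^{n+k}p_{U*}\mathbb{Q}[\dim P^\vee]$, so each $\mathcal{F}_k$ contains $\IC(R^{n+k}p_{U*}\mathbb{Q})$ as a direct summand, with any further summands necessarily supported on $X^\vee$. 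For $k\neq 0$, weak Lefschetz forces $R^{n-k}p_{U*}\mathbb{Q}$ to be constant on $U$, and hard Lefschetz then forces $R^{n+k}p_{U*}\mathbb{Q}$ to be constant as well, so the corresponding IC sheaves are shifted constant sheaves on $P^\vee$; for $k=0$ the generic contribution is $\IC(\mathbf{H})$.

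The essential remaining step is to rule out any summand of $\mathcal{F}_k$ whose support is contained in $X^\vee$. My plan is to reinterpret $Rp_*\mathbb{Q}_\mathcal{X}$ as a Radon transform: letting $\mathcal{I} \subset P\times P^\vee$ denote the full point--hyperplane incidence variety with projections $\tilde q:\mathcal{I}\to P$ and $\tilde p:\mathcal{I}\to P^\vee$, proper base change applied to the closed embedding $i_X:X\hookrightarrow P$ gives $Rp_*\mathbb{Q}_\mathcal{X}\cong R\tilde p_*\tilde q^*(i_{X*}\mathbb{Q}_X)$, which up to shifts is the Radon transform of the simple perverse sheaf $\IC(X)=i_{X*}\mathbb{Q}_X[\dim X]$. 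Brylinski's theorem on the Radon transform (\cite{BrylinskiRadon,BeilinsonRadon,SchRes}) asserts that the Radon transform of a simple perverse sheaf on $P$ is the direct sum of a simple perverse sheaf on $P^\vee$ and a collection of shifted constant sheaves, except in the degenerate case where the input is a constant sheaf on a linear subspace. The non-constancy hypothesis on $\mathbf{H}$ excludes this degeneracy (if $X$ were linear, the middle cohomology of its odd-dimensional hyperplane sections would vanish and $\mathbf{H}$ would be zero), so the only non-constant simple perverse summand of $Rp_*\mathbb{Q}[d_\mathcal{X}]$ is forced to be $\IC(\mathbf{H})$, as identified by its restriction to $U$.

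I expect the main obstacle to be this final step: eliminating summands supported on $X^\vee$ is not a formal consequence of the decomposition theorem and requires the genuinely non-formal input of Brylinski's theorem. In a written proof I would cite \cite{BrylinskiRadon,BeilinsonRadon,SchRes} for that input and verify only that the geometric setting meets its hypotheses --- smoothness of $\mathcal{X}$, simplicity of $\IC(X)$, and exclusion of the linear-subspace case via non-constancy of $\mathbf{H}$.
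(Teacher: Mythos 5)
Your proposal is correct and matches the paper's treatment: the paper does not prove Theorem~\ref{BeiSch} but quotes it as Theorem C of Schnell~\cite{SchRes}, noting (following Beilinson~\cite{BeilinsonRadon}) that it follows from Brylinski's theorem on the Radon transform~\cite{BrylinskiRadon}, which is exactly the non-formal input you isolate and cite for the step of excluding summands supported on the dual variety. Your preliminary reductions (decomposition theorem, Deligne degeneration on $U$, weak and hard Lefschetz for $k\neq 0$, and the base-change identification of $Rp_*\mathbb{Q}_{\mathcal{X}}$ with the Radon transform of $\IC(X)$) are all sound, so this is essentially the same argument, just written out.
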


\begin{proof}[Proof of Theorem~\ref{BCor}]
  Pick $H\in P^{\vee}$ and set $Y=X\cap H$. 
We have $\IH_H^j(R^{n+i}\pi_*\mathbb{Q})=0$ for $ij\neq 0$.   So, using
Theorem~\ref{gdf} (iii), we have 
\begin{equation}
  \rH^{n+j}(Y,\mathbb{Q})=\IH^{j}_H(\mathbf{H})\oplus \IH^0(R^{n+j}\pi_*\mathbb{Q})
\end{equation}
for $j\neq 0$.
By Hard-Lefschetz, $R^{n+j}\pi_*\mathbb{Q}(j)\cong R^{n-j}\pi_*\mathbb{Q}$.
So, since $\IH^{j}_H(\mathbf{H})=0$ for $j<0$, 
 we get that $b_{n+j} Y-b_{n-j}Y =\dim \IH^j_H(\mathbf{H})$  for $j>0$ as desired.
\end{proof}

\section{Examples}\label{exs}

\subsection*{Terminology Reminder} If $S$ is an irreducible complex
scheme of finite type and $P$ is a property of closed points of $S$,
then $P$ holds for the \emph{general} point of $S$ if $P$ holds
outside of a countable union of proper subschemes
of $S$.

\begin{remark}
  I am providing this reminder because there seems to be some confusion
  in the literature about the words ``general'' and ``very general.''
  (Some authors say ``very general'' for the above notion and ``general''
  when the property holds outside of a closed proper subscheme.)
    This is the way that the term is used by
  Voisin in~\cite[p.~93]{Voisin2}, and I believe that it conforms to the usage by Laza, Sacc\`a and
  Voisin in~\cite{lsv}. 
\end{remark}

\subsection*{Cubic $4$-folds} The paper~\cite{lsv} starts with a
smooth cubic $4$-fold $X$ embedded in $P=\mathbb{P}^5$ and considers
the family $p:\mathcal{X}\to P^{\vee}$.  The family
$p_U:\mathcal{X}_U\to U$ of smooth cubic $3$ folds gives rise to a
variation of Hodge structure $\mathbf{H}_{\mathbb{Z}}$ and a family
$\pi:J(\mathbf{H}_{\mathbb{Z}})\to U$ which turns out to be a family of $5$-dimensional
abelian varieties.

\begin{theorem}[Laza---Sacc\`a---Voisin] Suppose the cubic $4$-fold $X$
  is general.   Then there is flat regular
  compactification $\bar\pi:\bar J\to P^{\vee}$ 
  with irreducible fibers. 
\end{theorem}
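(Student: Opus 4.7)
My strategy is to extend $\pi$ by replacing the transcendental intermediate Jacobian with an algebro-geometric model that makes sense even when $Y_H := X \cap H$ acquires singularities. For a smooth cubic threefold $Y$ one has a Prym description $J(Y) \cong \Prym(\tilde C / C)$: after blowing up a chosen line $\ell \subset Y$, the projection $\tilde Y \to \mathbb{P}^2$ is a conic bundle with discriminant quintic $C \subset \mathbb{P}^2$ and an associated \'etale double cover $\tilde C \to C$. My plan is to relativize this Prym construction over $\mathbb{P}^{5\vee}$ and compactify using relative Mumford--Altman--Kleiman compactified Jacobians of the discriminant curves.

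First I would globalize. Fix a line $\ell \subset X$ and let $V_\ell \subset \mathbb{P}^{5\vee}$ be the $3$-dimensional locus of hyperplanes containing $\ell$; blowing up $\ell$ in the universal hyperplane section over $V_\ell$ produces a relative conic bundle and hence a family of discriminant double covers $\tilde\cc \to \cc \to V_\ell$. The relative compactified Prym $\cPrym(\tilde\cc/\cc) \to V_\ell$ is then a proper family extending $\pi$ over $V_\ell \cap U$. Since every hyperplane section of $X$ contains a $2$-dimensional family of lines, the $V_\ell$ collectively cover $\mathbb{P}^{5\vee}$. To glue, one must verify compatibility under change of line, or else give an intrinsic construction independent of $\ell$ (for instance, as a moduli space of semistable objects in the Kuznetsov component of $D^b(X)$). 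The output is a proper morphism $\bar\pi : \bar J \to \mathbb{P}^{5\vee}$ extending $\pi$.

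Next I would check flatness, irreducibility of fibers, and regularity of $\bar J$. All fibers have the expected dimension $5$, so flatness over the regular base $\mathbb{P}^{5\vee}$ would follow, via miracle flatness, from $\bar J$ being Cohen--Macaulay, and in particular from $\bar J$ being smooth. Irreducibility of each fiber reduces to irreducibility of the compactified Jacobian of $C_H$, which holds whenever $C_H$ is reduced and irreducible; genericity of $X$ keeps the curves $C_H$ integral with at worst mild singularities.

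The decisive obstacle I expect is the regularity of the total space $\bar J$. Over the discriminant hypersurface $\Delta \subset \mathbb{P}^{5\vee}$ parametrizing singular $Y_H$, the curves $C_H$ and their double covers degenerate, and compactified Jacobian constructions readily produce singularities in the total space of a family. One would need to stratify $\Delta$ by the singularity type of $Y_H$, write down an explicit local model of $\cPrym$ at each stratum, and verify that these local models piece together into a smooth total space. The role of genericity of $X$ is precisely to force the degenerations that actually appear to be mild enough (generically nodal $Y_H$, with codimension estimates on worse loci such as the chordal-cubic stratum) for such a case-by-case analysis to succeed. Once regularity is in hand, Corollary~\ref{PalCor} furnishes a nontrivial consistency check: every hyperplane section of a general cubic $4$-fold must then be palindromic.
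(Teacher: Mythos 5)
Your outline correctly identifies the Laza--Sacc\`a--Voisin strategy (relativize the conic-bundle/Prym description of $J(Y)$ obtained by projecting from a line, and compactify via relative compactified Pryms of the discriminant double covers), but you should know that the paper does not reprove this theorem: it quotes existence, flatness and regularity of $\bar J\to P^{\vee}$ directly from \cite{lsv}, and the only argument it supplies is for the irreducibility of the fibers, obtained by tracing through \cite[Definition~4.11]{lsv} (the compactified Prym is \emph{defined} as an irreducible component of a larger scheme, hence is irreducible), \cite[Proposition~4.14]{lsv} (the definition commutes with base change, so the fibers are irreducible), \cite[Proposition~5.1]{lsv} (flatness over the Fano variety $\mathcal{F}^0$ of lines), and the descent in \S 5 of \cite{lsv} from $\mathcal{F}^0$ to $P^{\vee}$.

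Measured as a standalone proof, your proposal has two genuine gaps. First, the two steps you explicitly defer --- gluing the line-dependent constructions into a single family over $P^{\vee}$ (or descending from $\mathcal{F}^0$), and smoothness of the total space along the strata of the discriminant --- are not consistency checks to be ``verified''; they are the technical core of \cite{lsv}, occupying most of that paper, and no argument is offered for either. In particular the genericity of $X$ is used there in an essential, quantitative way (controlling which singular hyperplane sections and which degenerate discriminant curves actually occur), so the claim cannot be closed by a general position remark. Second, your reduction of fiber irreducibility to ``irreducibility of the compactified Jacobian of an integral curve $C_H$'' is not the right reduction: the fiber is a compactified \emph{Prym}, roughly a distinguished piece of the fixed locus of an involution on the compactified Jacobian of the double cover $\widetilde{C}_H$, and its irreducibility does not follow from integrality of $C_H$. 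In \cite{lsv} irreducibility is built into the definition of $\cPrym$ and the real content is that this definition is stable under base change and still yields a flat, smooth total space; that is the chain of citations the paper assembles, and it is the part of your argument that would need to be replaced.
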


\begin{proof}[Explanation]  
The fact that there exists a regular flat compactification $\bar\pi:\bar J\to P^{\vee}$
is part of the main theorem of~\cite{lsv}.  
The irreducibility of the fibers is not explicitly stated
in~\cite{lsv}, but it is an important part of the construction.  Proving it 
amounts to tracing through several definitions and intermediate results 
in~\cite{lsv}, which I now do.

By~\cite[Definition 4.11]{lsv}, the compactified relative Prym variety, 
  $\cPrym \widetilde{\mathcal{C}}_B/\mathcal{C}_B$, is
    irreducible (as it is defined as an irreducible component of a larger
scheme).  By~\cite[Proposition 4.14]{lsv}, this definition is
    stable under base change so that the fiber over a point $b\in B$
    is also irreducible.  Then \cite[Proposition 5.1]{lsv} states
that $\cPrym \widetilde{\mathcal{C}}_B/\mathcal{C}_B\to
B$ is flat when $B$ is a certain Fano variety $\mathcal{F}^0$ of lines.

Section 5 of~\cite{lsv} descends the family of Prym varieties over $\mathcal{F}^0$ (which maps surjectively to $\mathbb{P}^5$)  to $P^{\vee}=\mathbb{P}^5$.  As explained in the paragraph between Lemmas 5.3 and 5.4
of~\cite{lsv}, the result is a family $\bar J\to P^{\vee}$ whose 
pullback to $\mathcal{F}^0$ is the above family of compactified Prym varieties.  Since
the compactified Prym varieties are irreducible, the fibers of $\bar J\to
P^{\vee}$ are as well.
\end{proof}

\begin{corollary}\label{PalCor2}
  Suppose $X$ is a general cubic $4$-folds and $H\in P^{\vee}$ is any
  hyperplane.  Then $Y:=X\cap H$ is palindromic.  
\end{corollary}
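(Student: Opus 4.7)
The plan is to feed the Laza-Sacc\`a-Voisin theorem recalled just above directly into Corollary~\ref{PalCor}. That theorem supplies, for a general cubic $4$-fold $X$, a flat regular compactification $\bar\pi \colon \bar J \to P^{\vee}$ of the abelian scheme $\pi \colon J(\mathbf{H}_{\mathbb{Z}}) \to U$ whose fiber is irreducible over \emph{every} point of $P^{\vee}$. Corollary~\ref{PalCor} then gives palindromicity of $Y = X \cap H$ at each such $H$, provided we also know that $\pi$ is non-constant.

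The only hypothesis not immediately supplied by~\cite{lsv} is the non-constancy of $\pi$, equivalently of the variation $\mathbf{H} = R^1 \pi_* \mathbb{Q}(1)$. The cleanest check is via Picard-Lefschetz: the full family $p \colon \mathcal{X} \to P^{\vee}$ acquires nodal cubic $3$-folds along the discriminant hypersurface, and the local monodromy of the middle cohomology around a smooth point of that hypersurface is a non-trivial transvection on the vanishing cycle. Since $\mathbf{H}$ is identified, via the Abel-Jacobi construction, with the primitive part of $R^3 p_{U*} \mathbb{Q}(2)$, it inherits the same non-trivial monodromy and is a fortiori non-constant. (A second route is the Clemens-Griffiths Torelli theorem for cubic $3$-folds, combined with the fact that the smooth hyperplane sections of a general smooth cubic $4$-fold are not all pairwise isomorphic.)

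With all hypotheses in hand, Corollary~\ref{PalCor} applied at every $H \in P^{\vee}$ yields $b_{n+k}(Y) = b_{n-k}(Y)$ for every $k$, with $n = 3$, which is exactly the palindromicity asserted. There is no serious obstacle here: the substantive content has been packaged into Theorem~\ref{BCor}, Corollary~\ref{ihvan}, and the main theorem of~\cite{lsv}, and the only remaining work is the elementary non-constancy verification above.
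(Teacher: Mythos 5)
Your proof is correct and follows exactly the route the paper intends: the corollary is stated without separate proof precisely because it is the immediate combination of the Laza--Sacc\`a--Voisin theorem (flat regular compactification with irreducible fibers for general $X$) with Corollary~\ref{PalCor}. Your additional verification that $\mathbf{H}$ is non-constant (via Picard--Lefschetz on the vanishing cohomology of the cubic $3$-fold sections) is a hypothesis the paper leaves implicit, and your check of it is sound.
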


This motivates the following conjecture.

\begin{conjecture}\label{PalConj}
  Suppose $X$ is a  general complete intersection in $P=\mathbb{P}^N$ of 
 multi-degree $(d_1,d_2,\ldots, d_k)$ with $d_1\leq \cdots \leq d_k$.   Assume
  that $d_1\gg 0$. 
  Then, for any hyperplane  $H\in P^{\vee}$, the hyperplane section $Y:=X\cap H$
  is palindromic.
\end{conjecture}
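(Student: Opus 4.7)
The plan is to leverage the Beilinson--Brylinski--Schnell decomposition (Theorem~\ref{BeiSch}) together with Theorem~\ref{BCor} to translate palindromicity of $Y := X \cap H$ into a precise vanishing statement for local intersection cohomology, and then to verify that vanishing by controlling the singularities of $Y$ under the hypothesis $d_1 \gg 0$. Palindromicity of $Y$ is equivalent to $\IH^j_H \mathbf{H} = 0$ for all $j \geq 1$. Moreover, combining the decomposition of $Rp_*\mathbb{Q}$ with weak and hard Lefschetz applied fiberwise to the smooth fibers yields the identity $b_{n-j}(Y_H) = b_{n-j}(X)$ for every $H \in P^{\vee}$ and every $j \geq 1$; together with Theorem~\ref{BCor} this gives the explicit formula
\begin{equation*}
b_{n+j}(Y_H) = \dim \IH^j_H \mathbf{H} + b_{n-j}(X), \qquad j \geq 1,
\end{equation*}
so the conjecture reduces to showing that every hyperplane section has the same above-middle Betti numbers as those prescribed by weak Lefschetz on a smooth model.

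I would first observe that $p \colon \mathcal{X} \to P^{\vee}$ is itself a flat regular compactification of $p_U \colon \mathcal{X}_U \to U$: the total space $\mathcal{X}$ is smooth as a $\mathbb{P}^{N-1}$-bundle over $X$, every fiber has dimension $n$ (since $X$ is non-degenerate), and miracle flatness applies. For general $X$ with $d_1 \gg 0$, a Bertini-type argument further shows that every $Y_H$ is irreducible. Applying Corollary~\ref{CorOb} to this compactification immediately yields $\IH^j_H \mathbf{H} = 0$ for $j > n$, so the content of the conjecture is concentrated in the range $0 < j \leq n$.

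To handle that range I would stratify $P^{\vee}$ by the singularity type of $Y_H$ and argue stratum by stratum. For general $X$ with $d_1 \gg 0$, a tangent-space count in the spirit of Mori's arguments for high-degree hypersurfaces should show that the singular loci which actually occur on any $Y_H$ have codimension in $Y_H$ growing with $d_1$, and that the local analytic types of these singularities belong to a short list of standard complete intersection singularities. Near each such $H$, the family $p$ should be a versal deformation of the singularities of $Y_H$, so that the stalk of $\IC(\mathbf{H})$ at $H$ can be computed from nearby and vanishing cycles. For isolated complete intersection singularities the vanishing cycle cohomology is concentrated in middle degree by the classical results of Brieskorn, A'Campo, and Steenbrink, which gives $\IH^j_H \mathbf{H} = 0$ in the remaining range.

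The main obstacle will be making the versality step precise: one must rule out, for \emph{every} $H \in P^{\vee}$ and not only for $H$ in some open or constructible subset, the appearance of singularities pathological enough to generate intersection cohomology above middle degree. This is a global geometric statement about the hyperplane section family of a general high-degree complete intersection, and establishing it will likely require jet-space dimension counts combined with explicit local models for the worst possible degenerations. I would also expect Nori's connectivity theorem, applied to families of complete intersections of high degree, to play a role in controlling the monodromy of the family and hence the intermediate extension.
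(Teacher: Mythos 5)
First, a point of orientation: the statement you are proving is stated in the paper as Conjecture~\ref{PalConj}; the paper offers no proof of it, only the special case of surface complete intersections (where Noether--Lefschetz gives irreducibility of the curve section $Y$, and an irreducible curve is automatically palindromic). So any complete argument here would be new, and your proposal would have to be airtight. It is not: it is a programme whose decisive step fails. Your reduction to the vanishing $\IH^j_H\mathbf{H}=0$ for $j\geq 1$ via Theorem~\ref{BCor} is fine (granted non-constancy of $\mathbf{H}$), and the observation that $p\colon\mathcal{X}\to P^{\vee}$ is a flat regular compactification only gives the vanishing for $j>n$, which is already immediate from $\dim Y_H=n$. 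The real content is $0<j\leq n$, and above all $j=1$.

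The key error is the claim that, once the singularities of $Y_H$ are isolated (complete intersection) singularities, middle-degree concentration of the vanishing-cycle cohomology forces $\IH^j_H\mathbf{H}=0$ ``in the remaining range.'' That argument can only control $j\geq 2$, i.e.\ weak palindromicity --- this is exactly Proposition~\ref{wphyp} --- and it says nothing about $j=1$. The paper's own example shows the implication you need is false: the Segre cubic threefold $Y$ of Theorem~\ref{Dthm} has only ten ordinary double points (the mildest possible isolated singularities, with vanishing cycles concentrated in middle degree), yet $b_4Y=6>1=b_2Y$, so $\IH^1_H\mathbf{H}\neq 0$ for the corresponding hyperplane section of a suitable smooth cubic fourfold (Corollary~\ref{cexam}). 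The degree-one local intersection cohomology is a global defect invariant, sensitive to the position of the singular points (whether they impose independent conditions), not to their local analytic type, so no local versality-plus-vanishing-cycles argument can dispose of it; ruling it out for \emph{every} $H$ when $X$ is general of high multidegree is precisely the open content of the conjecture. In addition, the versality claim is itself doubtful (the hyperplane sections of a fixed $X$ form only an $N$-dimensional family, which cannot be versal for the singularities that may occur), and the hypotheses ``$X$ general'' and ``$d_1\gg 0$'' are never used in a verifiable way --- they enter only through unexecuted ``tangent-space'' and ``jet-space'' counts. As it stands, the proposal proves nothing beyond what Proposition~\ref{wphyp} already gives.
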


\begin{remark}
  Perhaps ``$\gg$'' could be 
replaced with a reasonable lower bound. 
\end{remark}

Here is  a simple argument proving the conjecture when $X$
is a general surface complete intersection of multi-degree not equal to
$(2), (3)$ or $(2,2)$.  In that case, the Noether-Lefschetz theorem says
that the N\'eron-Severi group of $X$ is
$\mathbb{Z}$ with generator $[X\cap H]$ (for any hyperplane $H$).  (See Voisin's book \cite[Theorem 3.32]{Voisin2} and~\cite[Theorem 1]{Kim} for
modern proofs.)   It follows
easily that $Y:=X\cap H$ is irreducible (since $[Y]$ cannot be the direct sum
of two non-trivial effective divisors).  But, since $Y$ is a curve,
this implies that $Y$ is palindromic.

\begin{proposition}\label{wphyp}
  Suppose $X$ is a smooth, hypersurface in $P=\mathbb{P}^N$ and $H\in P^{\vee}$
  is any hyperplane section.  Then $Y:=X\cap H$ is weakly palindromic.
\end{proposition}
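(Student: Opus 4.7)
The plan is to reduce the proposition to a standard cohomological fact about hypersurfaces with only isolated singularities, exploiting the finiteness of the Gauss map.

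First, I would show that $Y=X\cap H$ has only isolated singularities.  If $\deg X=1$ then $X$ is a linear subspace and $Y$ itself is linear, hence palindromic; assume $\deg X\geq 2$.  Then the Gauss map $\gamma:X\to\checkp^N$, sending $x$ to its embedded projective tangent hyperplane $T_xX$, is a finite morphism (by Zak's theorem on tangencies, or equivalently by direct analysis of the polynomial map $[\nabla F]$).  The singular locus of $Y$ consists of those $x\in X\cap H$ with $T_xX\subseteq H$; since $T_xX$ and $H$ are both hyperplanes in $\PP^N$, this forces $T_xX=H$, i.e.\ $x\in\gamma^{-1}(H)$.  Therefore $\mathrm{Sing}(Y)=\gamma^{-1}(H)$ is finite.

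Next, I would invoke the following standard fact: for a hypersurface $Y\subset\PP^{N-1}$ of dimension $n$ with only isolated singularities,
\[
  H^k(Y,\QQ)\cong H^k(\PP^{N-1},\QQ)\quad\text{for all }k\in[0,2n]\setminus\{n\}.
\]
For $k<n$ this is the weak Lefschetz theorem.  For $k>n$, one uses that $Y$ is a local complete intersection, so $\QQ_Y[n]$ is perverse and has the same restriction to the smooth locus as $\IC(Y)$; the two therefore differ by a perverse sheaf supported on the $0$-dimensional set $\mathrm{Sing}(Y)$.  Such a perverse sheaf is concentrated in cohomological degree $0$ and thus contributes to ordinary cohomology only in degree $n$, so $H^k(Y)\cong\IH^k(Y)$ for $k\neq n$.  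Combining Poincar\'e duality for intersection cohomology with weak Lefschetz for $\IH$ then identifies $\IH^k(Y)$ with $H^k(\PP^{N-1})$ for each $k\in[0,2n]\setminus\{n\}$.

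Weak palindromicity then follows by a parity count on the Betti numbers of projective space.  For $k>1$, both $n+k$ and $n-k$ differ from $n$; when $k\leq n$, both lie in $[0,2n]$ and have the same parity, so
\[
  b_{n+k}(Y)=b_{n+k}(\PP^{N-1})=b_{n-k}(\PP^{N-1})=b_{n-k}(Y),
\]
since the Betti numbers of $\PP^{N-1}$ in this range depend only on parity.  When $k>n$, both $b_{n+k}(Y)$ and $b_{n-k}(Y)$ vanish for dimension reasons.  Equivalently, by Theorem~\ref{BCor} the desired vanishing is $\IH^k_H(\mathbf{H})=0$ for $k\geq 2$.

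The main obstacle is the cohomological identity $H^k(Y)\cong\IH^k(Y)$ for $k>n$ in the second step: this is where one genuinely exploits the lci property of $Y$ together with the $0$-dimensionality of $\mathrm{Sing}(Y)$ to control the difference between $\QQ_Y[n]$ and $\IC(Y)$.  Once that identification is in hand, the rest of the proof is routine parity bookkeeping.
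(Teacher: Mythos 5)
Your first step (finiteness of $\mathrm{Sing}(Y)$ via the Gauss map) is fine and is exactly the ``main point'' of the paper's sketch, and your overall strategy --- compare $H^{\bullet}(Y)$ with $\IH^{\bullet}(Y)$ --- is one of the two routes the paper indicates (it cites~\cite[Theorem 2.1]{DimSing}). But the ``standard fact'' you invoke is false as stated, and in fact contradicts the paper itself: you claim $H^k(Y,\QQ)\cong H^k(\mathbb{P}^{N-1},\QQ)$ for \emph{every} $k\neq n$, which would make every hyperplane section of a smooth hypersurface fully palindromic; yet Theorem~\ref{Dthm} combined with Lemma~\ref{Nlem} produces a cubic threefold $Y=X\cap H$ inside a smooth cubic fourfold with $b_4Y=6\neq 1$, i.e.\ a failure in degree $n+1$ (and this is exactly what Corollary~\ref{cexam} exploits). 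The slip is in the perverse-sheaf bookkeeping. It is true that $\QQ_Y[n]$ is perverse (lci) and that the kernel and cokernel of the natural map $\QQ_Y[n]\to\IC(Y)$ are perverse sheaves supported on the finite set $\mathrm{Sing}(Y)$, hence skyscrapers in degree $0$; but a skyscraper sitting in a short exact sequence enters the hypercohomology long exact sequence in \emph{two} adjacent degrees, so what follows is only $H^m(Y)\cong\IH^m(Y)$ for $m\notin\{n-1,n,n+1\}$, not for all $m\neq n$. Concretely, for the ten-nodal cubic threefold one has $\IH^2(Y)\cong\IH^4(Y)^{\vee}\cong H^4(Y)^{\vee}$ of dimension $6$ while $H^2(Y)=\QQ$; so both your claim ``$H^k\cong\IH^k$ for $k\neq n$'' and your ``weak Lefschetz for $\IH$'' step ($\IH^k(Y)\cong H^k(\mathbb{P}^{N-1})$ for $k<n$) break at $k=n-1$ --- the Lefschetz theorem for intersection cohomology of a hyperplane section requires transversality to the strata and is simply not available for the arbitrary (tangent) hyperplanes at issue here.

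The good news is that all failures are confined to degrees $n-1,n,n+1$, precisely the degrees weak palindromicity ignores, so your argument is repairable rather than wrong in outcome. From the skyscraper kernel/cokernel you legitimately get $H^{n\pm k}(Y)\cong\IH^{n\pm k}(Y)$ for $k\geq2$; then Poincar\'e duality for intersection cohomology plus ordinary weak Lefschetz (valid for any hypersurface $Y\subset\mathbb{P}^{N-1}$ because the complement is smooth affine) give $b_{n+k}Y=\dim\IH^{n+k}(Y)=\dim\IH^{n-k}(Y)=b_{n-k}Y$ for $k\geq2$; there is no need to compare with $\mathbb{P}^{N-1}$ above the middle degree at all, and doing so is what led you to an overstatement. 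With the claims restricted to $k\geq 2$ (which is exactly the content of Dimca's theorem cited in the paper), your proof goes through and is essentially the ``intersection cohomology versus ordinary cohomology'' alternative sketched in the paper.
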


\begin{proof}[Sketch]
  This is well-known (see~\cite[Theorem 2.1]{DimSing}).  So I only give a sketch.  The main point is that
  $Y$ has isolated singularities.  From this one can either use the
  Clemens-Schmid exact sequence or an argument comparing the
  intersection cohomology with the ordinary cohomology.  
\end{proof}

Suppose $X$ is an arbitrary smooth cubic $4$-fold.  Since every
hyperplane section is weakly palindromic, Corollary~\ref{PalCor} does
not rule out the existence of a flat regular compactification
$\bar\pi: \bar{J}\to P^{\vee}$.  However, it can rule out the
existence of a flat regular compactification with irreducible
fibers: If $X$ is a cubic fourfold containing a non-palindromic cubic
$3$-fold $Y=X\cap H$, then the fiber $\bar\pi^{-1} \{H\}$ is not
irreducible.  To find such a cubic $4$-fold we use the following
Lemma.  

\begin{lemma}\label{Nlem}
  Suppose $Y=V(f)$ is a degree $d$ hypersurface in $\mathbb{P}^{N-1}$.
  Fix an embedding $\mathbb{P}^{N-1}\subset \mathbb{P}^{N}$.  Then there
  is a smooth degree $d$ hypersurface $X$ in $\mathbb{P}^{N}$ such
  that $Y=X\cap\mathbb{P}^N$ if and only if $Y$ has isolated singularities.
\end{lemma}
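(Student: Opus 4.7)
The plan is to parametrize all extensions of $f$ to a degree $d$ form on $\mathbb{P}^N$ and then decompose the singular locus of the resulting hypersurface into a piece lying in $\mathbb{P}^{N-1}$ and a piece disjoint from $\mathbb{P}^{N-1}$. Choose coordinates so that $\mathbb{P}^{N-1}=V(x_N)\subset\mathbb{P}^N$ and view $f$ as a form in $x_0,\dots,x_{N-1}$. Any degree $d$ form $F$ restricting to $f$ on $\{x_N=0\}$ can be written uniquely as $F=f+x_N g$ with $g\in\mathbb{C}[x_0,\dots,x_N]_{d-1}$. A direct computation of the partial derivatives $\partial_i F$ shows that $X=V(F)$ is singular at a point $p\in\mathbb{P}^{N-1}$ if and only if $p\in\mathrm{Sing}(Y)$ and $g(p)=0$, while the singular locus of $X$ away from $\mathbb{P}^{N-1}$ is controlled by Bertini.

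For the backward implication, suppose $\mathrm{Sing}(Y)=\{p_1,\dots,p_r\}$ is finite. The condition $g(p_i)\neq 0$ for all $i$ cuts out a nonempty Zariski open subset of $H^0(\mathbb{P}^N,\mathcal{O}(d-1))$, namely the complement of $r$ hyperplanes. Separately, the linear system spanned by $f$ and $x_N\cdot H^0(\mathcal{O}(d-1))$ has base locus exactly $Y$: since it contains each $x_Nx_i$, the base locus lies in $\{x_N=0\}$, where the system reduces to $f$. Bertini in characteristic zero then produces a nonempty open subset of this linear system whose members are smooth on $\mathbb{P}^N\setminus Y$. Intersecting these two open conditions gives the desired smooth extension $X$.

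For the forward implication, assume $\mathrm{Sing}(Y)$ has positive dimension; in particular $d\geq 2$, since $d=1$ makes $Y$ a smooth hyperplane. Let $C$ be a positive-dimensional irreducible component of $\mathrm{Sing}(Y)$. For any extension $F=f+x_Ng$, the analysis above shows that $\mathrm{Sing}(X)\cap\mathbb{P}^{N-1}$ contains $C\cap V(g|_{\mathbb{P}^{N-1}})$. If $g|_{\mathbb{P}^{N-1}}\equiv 0$ then $C\subset\mathrm{Sing}(X)$; otherwise $g|_{\mathbb{P}^{N-1}}$ is a nonzero form of positive degree $d-1$, and the intersection is nonempty since the hypersurface $V(g|_{\mathbb{P}^{N-1}})$ meets every positive-dimensional subvariety of $\mathbb{P}^{N-1}$ by the projective dimension theorem. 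Either way $X$ is singular, ruling out a smooth extension. The only step needing attention is the Bertini application, where one must identify the base locus of the linear system with $Y$ and verify there is no fixed component (which follows because $f$ and $x_N$ share no common factor); the rest is elementary Jacobian bookkeeping.
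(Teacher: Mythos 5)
Your proof is correct and follows essentially the same route as the paper's: for the ``if'' direction you use the same linear system spanned by $f$ and $x_N\cdot H^0(\mathcal{O}(d-1))$, identify its base locus with $Y$, apply Bertini away from $Y$, and kill the remaining singularities by choosing $g$ nonvanishing at the finitely many singular points of $Y$. The paper dismisses the ``only if'' direction as easy; your Jacobian bookkeeping (any positive-dimensional component of $\mathrm{Sing}(Y)$ must meet $V(g|_{\mathbb{P}^{N-1}})$, so every extension is singular) is a correct way to fill that in.
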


\begin{proof}[Sketch] The ``only if'' part is easy (and was already
  used above in the proof of Proposition~\ref{wphyp}).  For the ``if''
  part, suppose $f(x_1,\ldots, x_N)$ is a degree $d$ homogeneous
  polynomial.  Consider the linear subspace $V$ in
  $\rH^0(\mathbb{P}^N,\mathcal{O}_{\mathbb{P}^N}(d))$ spanned by $f$
  and $x_0h$ as $h$ runs over all degree $d-1$ homogeneous polynomials
  in the $N+1$ variables.   The base locus of the linear system $|V|$
  is $Y$.   So the general member of $|V|$ is smooth off of $Y$ by Bertini.
  But the singularities of $g=f+x_0h$ on $Y$ are contained in the intersection
  of $V(h)$ with the singularities of $Y$.  Therefore, the general member of 
  $|V|$ is smooth.  
\end{proof}

Now, I use a result of Segre and Fano as interpreted by Dolgachev.

\begin{theorem}\label{Dthm}
  There exists a cubic $3$ fold $Y$ with $10$ ordinary double
  points and $b_4 Y=6$.    
\end{theorem}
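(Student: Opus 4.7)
The plan is to realize $Y$ as the classical Segre cubic primal and to verify its properties by direct computation. Take $Y$ to be the cubic $3$-fold cut out in $\mathbb{P}^5$ by
\[\sum_{i=0}^{5} x_i=0,\qquad \sum_{i=0}^{5} x_i^3=0,\]
regarded as a hypersurface in the hyperplane $\mathbb{P}^4=\{\sum x_i=0\}$. The strategy has two components: a direct Jacobian check for the ten singularities, and a computation of $b_4(Y)$ by passing through a small resolution.

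For the singularities, applying the Jacobian criterion to the complete intersection in $\mathbb{P}^5$ forces the row $(3x_i^2)_i$ of the Jacobian to be proportional to the row $(1,\ldots,1)$, so $x_i^2=\lambda/3$ for some common $\lambda$ and hence each $x_i=\pm a$ for a common $a$; combined with $\sum x_i=0$, exactly three of the coordinates equal $+a$ and three equal $-a$. Up to the sign $a\sim -a$ this yields $\binom{6}{3}/2=10$ points. The $S_6$-action on coordinates is transitive on them, so it suffices to examine $(1{:}1{:}1{:}{-1}{:}{-1}{:}{-1})$: a short local computation in affine coordinates on $\mathbb{P}^4$ shows that the defining cubic has non-degenerate Hessian on the $4$-dimensional ambient tangent space, confirming the ordinary double point property.

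For the Betti number, I would use the classical realization of $Y$ as the image of $\mathbb{P}^3$ under the rational map defined by the $\mathbb{P}^4$-dimensional linear system of quadrics through five general points $p_1,\ldots,p_5$. Resolving the base locus gives a morphism $f:\tilde Y\to Y$ from $\tilde Y:=\mathrm{Bl}_{\{p_i\}}\mathbb{P}^3$, and the exceptional locus of $f$ consists of the ten proper transforms of the lines $\overline{p_ip_j}$: each such transform is a $(-1,-1)$-curve (the normal bundle $\mathcal{O}(1)^{\oplus 2}$ of a line in $\mathbb{P}^3$ twists by $\mathcal{O}(-p_i-p_j)$ after blowing up the two endpoints) contracted to a node of $Y$. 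Hence $f$ is a small resolution, and the standard blow-up formula yields $b_4(\tilde Y)=1+5=6$. For the Leray spectral sequence of $f$ one has $R^0f_*\mathbb{Q}=\mathbb{Q}_Y$ and $R^2f_*\mathbb{Q}=\bigoplus_{\text{nodes}}\mathbb{Q}$ (skyscraper), with all other Leray sheaves zero; the only potentially nonzero differential $d_3:E_3^{0,2}\to E_3^{3,0}=H^3(Y,\mathbb{Q})$ does not touch the diagonal $p+q=4$, so $H^4(Y,\mathbb{Q})\cong H^4(\tilde Y,\mathbb{Q})$ and $b_4(Y)=6$.

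The main obstacle is the birational identification of the small resolution with $\mathrm{Bl}_5\mathbb{P}^3$; once that is in hand, both the $(-1,-1)$-curve verification and the Leray argument are mechanical, and the conclusion follows along the lines of Dolgachev's rendering of the classical work of Segre and Fano.
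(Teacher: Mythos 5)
Your argument is essentially correct, but it is a genuinely different route from the paper's, which offers no proof at all: it simply cites Dolgachev's article on Segre and nodal cubic threefolds (Proposition 1.1 and the surrounding discussion) for the existence of a $10$-nodal cubic threefold with $b_4=6$. What you have written out is, in effect, the classical argument that underlies that citation: the Segre cubic primal $\sum x_i=0$, $\sum x_i^3=0$, the Jacobian count giving the $\binom{6}{3}/2=10$ nodes permuted transitively by $S_6$, and the computation of $b_4$ via the small resolution $\mathrm{Bl}_{5}\mathbb{P}^3\to Y$ together with the Leray spectral sequence (your observation that $d_3\colon E_3^{0,2}\to E_3^{3,0}$ lives in total degrees $2$ and $3$ and hence cannot affect $H^4$, while $E_2^{2,2}=0$ because $R^2f_*\mathbb{Q}$ is a skyscraper, is exactly the right point, and the blow-up formula correctly gives $b_4(\mathrm{Bl}_5\mathbb{P}^3)=1+5=6$). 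The one step you correctly flag as not fully established is the identification of the explicitly defined Segre cubic with the image of $\mathbb{P}^3$ under the quadrics through five general points; this is classical but load-bearing, since without it the $b_4$ computation does not attach to the variety on which you verified the ten nodes. Note that you could streamline by simply \emph{defining} $Y$ to be the image of that map: the image has degree $(2H-\sum E_i)^3=3$, the ten proper transforms of the lines $\overline{p_ip_j}$ are $(-1,-1)$-curves contracted to ordinary double points, and the same Leray argument gives $b_4=6$, so the symmetric equations and the Jacobian computation become optional. The trade-off is that the paper's citation is shorter but opaque, whereas your version is self-contained modulo one classical identification and makes visible where the number $6$ comes from.
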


\begin{proof}
  See \cite[Proposition 1.1]{DolSeg} and the discussion shortly before
  and shortly after. 
\end{proof}

\begin{corollary}\label{cexam}
  There exists a smooth, cubic $4$-fold $X$ containing a cubic $3$-fold
  $Y$ with $b_4 Y=6$.  For such an $X$, there is no
  flat regular compactification $\bar\pi: \bar J\to P^{\vee}$
  with irreducible fibers.
\end{corollary}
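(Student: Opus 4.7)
My plan has two halves: first produce the cubic $4$-fold $X$, then use palindromicity to rule out an irreducible-fibered flat regular compactification.

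For existence of $X$, I would simply splice together the two preceding results. Theorem~\ref{Dthm} furnishes a cubic $3$-fold $Y_0 \subset \mathbb{P}^4$ with exactly $10$ ordinary double points and $b_4 Y_0 = 6$. Since $10$ nodes are isolated singularities, Lemma~\ref{Nlem} (with $N=5$, $d=3$) produces a smooth cubic $4$-fold $X \subset \mathbb{P}^5$ cutting out $Y_0$ on the chosen hyperplane $\mathbb{P}^4 \subset \mathbb{P}^5$. Rename the hyperplane $H$ and set $Y := X \cap H = Y_0$; this $X$ is the one sought in the first sentence of the corollary.

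For the non-existence assertion, the strategy is to show that $Y$ fails to be palindromic and then invoke Corollary~\ref{PalCor}. Here the relevant $n$ is $2m-1 = 3$, so palindromicity demands $b_{3+k} Y = b_{3-k} Y$ for all $k$; I will violate this at $k = 1$. Since $Y$ has only isolated singularities, the standard Lefschetz hyperplane theorem for singular projective hypersurfaces gives $H^i(Y,\mathbb{Q}) \cong H^i(\mathbb{P}^4,\mathbb{Q})$ for $i < \dim Y = 3$, so $b_2 Y = 1$. Combined with the input $b_4 Y = 6$ from Theorem~\ref{Dthm}, this yields
\[
b_{n+1} Y - b_{n-1} Y = b_4 Y - b_2 Y = 5 \neq 0,
\]
so $Y$ is not palindromic. (It is weakly palindromic, consistently with Proposition~\ref{wphyp}.)

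To close the argument I would verify the hypotheses of Corollary~\ref{PalCor}: the family $\pi:J(\mathbf{H}_{\mathbb{Z}})\to U$ of intermediate Jacobians of smooth cubic $3$-folds is the non-constant abelian scheme of relative dimension $5$ recalled in the Laza--Sacc\`a--Voisin discussion preceding this section. If a flat regular compactification $\bar\pi:\bar J \to P^{\vee}$ with irreducible fibers existed, Corollary~\ref{PalCor} would force every hyperplane section $X \cap H$ to be palindromic, contradicting the computation above for our particular $H$. The only genuinely non-formal step is the input $b_2 Y = 1$, and the main subtlety I would want to write out carefully is precisely the citation for the Lefschetz hyperplane theorem in the singular-hypersurface setting, since everything else is a direct concatenation of the preceding results.
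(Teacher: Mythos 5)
Your proof is correct and follows exactly the paper's argument: Theorem~\ref{Dthm} plus Lemma~\ref{Nlem} produce the smooth cubic $4$-fold $X$ containing the nodal $Y$ with $b_4Y=6$, weak Lefschetz gives $b_2Y=1$ so $Y$ is not palindromic, and Corollary~\ref{PalCor} rules out an irreducible-fibered flat regular compactification. No gaps; your added care about citing the singular Lefschetz theorem is reasonable but the paper treats it as standard.
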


\begin{proof}
  The cubic $3$-fold $Y$ with $b_4 Y=6$ is not palindromic since $b_2 Y=1$
  by weak Lefschetz.  Using Lemma~\ref{Nlem}, we can find a
  smooth cubic $4$-fold $X$ containing $Y$.  
  The result then follows from Corollary~\ref{PalCor}.
\end{proof}

\subsection*{Quadrics in Cubic $4$-folds}

Suppose $X$ is a smooth $2m$-dimensional subvariety in
$P=\mathbb{P}^N$ as in the beginning of \S\ref{Beil}. The family
$\pi:J(\mathbf{H}_{\mathbb{Z}})\to U$ of intermediate Jacobians will be an abelian
scheme provided  the Hodge structure $\rH^{2m-1}Y$ of a smooth hyperplane section $Y$ is level
$\leq 1$.   This means that $\rH^{2m-1}(Y,\mathbb{C})=F^{m} \rH^{2m-1}(Y,\mathbb{C})$. 
As in \S\ref{Beil}, we set $n=2m-1$. 

I do not have a very clear idea how often the situation above occurs for
arbitrary $X$.
However, in~\cite{RapDel}, Rapoport has a table of all complete intersections
$Y$ for which the Hodge level of the middle dimensional cohomology is $1$.
Write $V_n(d_1,\ldots, d_k)$ for the family of smooth complete intersections of dimension
$n$ coming from intersecting $k$ hypersurfaces of degrees $d_1,\ldots , d_k$ in
$\mathbb{P}^{n+k}$.  Then, according to Rapoport's table, the only
non-empty families of level $1$ with $n$ odd are:
$V_n(2,2)$, $V_n(2,2,2)$, $V_3(3)$, $V_3(2,3)$, $V_5(3)$ and $V_3(4)$.

The case where $Y$ is a cubic $3$-fold, $V_3(3)$, was the subject of
the last subsection.  In this, section I want to consider $V_3(2,3)$.

So fix a cubic $4$-fold $X$ embedded in $\mathbb{P}^5$.  Set
$\mathcal{L}:=\mathcal{O}_{\mathbb{P}^5}(1)_{|X}$.  An easy
computation shows that
$|\mathcal{L}^2|=\dim
H^0(\mathbb{P}^5,\mathcal{O}_{\mathbb{P}^5}(2))-1 =20$.   So, the
complete linear system $\mathcal{L}^2$, gives an embedding of $X$ into
$P:=\mathbb{P}^{20}$.  Cutting $X$ with hyperplanes $H\subset P$, we
get a family $p:\mathcal{X}\to P^{\vee}$ as in the beginning
of~\S\ref{Beil} which is smooth over an open subset
$U\subset P^{\vee}$.  Since the smooth hyperplane sections are
complete intersections of type $V_3(2,3)$, they have level $1$.
Therefore, the family $\pi:J(\mathbf{H}_{\mathbb{Z}})\to U$ is an abelian
scheme. In fact, Rapoport's table also gives $b_3 Y=40$
for $Y$ of type $V_3(2,3)$.  So the family $\pi:J(\mathbf{H}_{\mathbb{Z}})\to U$
is, in fact, a family of $20$-dimensional abelian varieties over a
$20$ dimensional base.

\begin{theorem}\label{fourfold}
  Let $X$ be a cubic $4$-fold as above embedded in $P=\mathbb{P}^{20}$.
  There is no regular flat compactification $\bar\pi:\bar J\to
  P^{\vee}$ of the family $\pi:J(\mathbf{H}_{\mathbb{Z}})\to U$ of intermediate
  Jacobians.
\end{theorem}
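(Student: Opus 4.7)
The plan is to argue by contrapositive using Corollary~\ref{PalCor}: I will exhibit a hyperplane $H\in P^\vee$ whose corresponding section $Y=X\cap H$ fails to be weakly palindromic, which immediately rules out any flat regular compactification.

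Since $X\subset P=\mathbb{P}^{20}$ is embedded by the complete linear system $|\mathcal{L}^2|$ and $H^0(\mathbb{P}^5,\mathcal{O}(2))\to H^0(X,\mathcal{L}^2)$ is an isomorphism (from the short exact sequence $0\to\mathcal{O}_{\mathbb{P}^5}(-1)\to\mathcal{O}_{\mathbb{P}^5}(2)\to\mathcal{O}(2)|_X\to 0$), each hyperplane $H\in P^\vee$ corresponds, up to scalar, to a quadric $Q\subset\mathbb{P}^5$ with associated section $X\cap Q$. I would take $Q=V(\ell_1)\cup V(\ell_2)$, the union of two general hyperplanes in $\mathbb{P}^5$. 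By Bertini applied to the smooth variety $X$, for generic such $\ell_1,\ell_2$ each $Y_i:=X\cap V(\ell_i)$ is a smooth cubic $3$-fold and $Z:=Y_1\cap Y_2$ is a smooth cubic surface in the $\mathbb{P}^3=V(\ell_1)\cap V(\ell_2)$. Then $Y=Y_1\cup Y_2$ is connected (because $Z\neq\emptyset$) and has exactly two $3$-dimensional irreducible components, so $b_0(Y)=1$ while $b_6(Y)=2$. With $n=3$ for this family, weak palindromicity of $Y$ would force $b_6(Y)=b_0(Y)$, a contradiction.

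The only hypothesis of Corollary~\ref{PalCor} still needing verification is that the abelian scheme $\pi:J(\mathbf{H}_{\mathbb{Z}})\to U$ is non-constant, equivalently that $\mathbf{H}$ is a non-constant variation of Hodge structure. This is a standard fact for the family $V_3(2,3)$: non-triviality of the local monodromy follows from a Picard--Lefschetz argument at a generic nodal degeneration of the quadric $Q$, and more generously from the fact that the period map on the $40$-dimensional moduli of such complete intersections is non-trivial. The main (minor) obstacle in the whole argument is simply to spot the right degenerate quadric $Q$; once it is chosen, the failure of weak palindromicity reduces to the trivial observation that the number of top-dimensional irreducible components of $Y$ exceeds the number of connected components.
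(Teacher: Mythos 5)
Your argument is correct and is essentially the paper's own proof: both take the point of $P^\vee$ corresponding to a union of two hyperplanes in $\mathbb{P}^5$, observe that the resulting section $Y$ has two irreducible $3$-dimensional components so $b_6Y=2\neq 1=b_0Y$, and conclude via Corollary~\ref{PalCor}. Your explicit check that $\mathbf{H}$ is non-constant is a welcome extra detail that the paper leaves implicit.
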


\begin{proof}
  The elements $H\in P^{\vee}$ are in 1-1 correspondence with quadrics in
  $\mathbb{P}^5$.   Pick two hyperplanes $L_1$ and $L_2$ in $\mathbb{P}^5$
  such that the cubic $3$ folds $Y_i:=X\cap L_i$ are smooth and distinct.
  Let $H$ be the point in $P^{\vee}$ corresponding to the union $L_1\cup L_2$.
  Then $Y:=X\cap H$ has two irreducible components.  Therefore $b_6 Y=2$.
  So $Y$ is not weakly palindromic.   The result follows from
  Corollary~\ref{PalCor}.
\end{proof}

\begin{remark}
  In~\cite[\S 1.3]{lsv}, Laza, Sacc\`a and Voisin point out that 
  the total space of the family $J(\mathbf{H}_{\mathbb{Z}})\to U$ admits a holomorphic symplectic form
  which would extend to any compactification $\bar\pi:\bar J\to P^{\vee}$.  They also
  show that this form is non-degenerate above a quadric if and only if the
  quadric is non-degenerate~\cite[Lemma 1.20]{lsv}.   Theorem~\ref{fourfold} can be
  seen as an amplification of this result.  
\end{remark}


\begin{thebibliography}{10}

\bibitem{BeilinsonRadon}
A.~Beilinson.
\newblock A remark on primitive cycles and {F}ourier-{R}adon transform.
\newblock In {\em Regulators}, volume 571 of {\em Contemp. Math.}, pages
  19--23. Amer. Math. Soc., Providence, RI, 2012.

\bibitem{BBD}
A.~A. Be{\u\i}linson, J.~Bernstein, and P.~Deligne.
\newblock Faisceaux pervers.
\newblock In {\em Analysis and topology on singular spaces, {I} ({L}uminy,
  1981)}, volume 100 of {\em Ast\'erisque}, pages 5--171. Soc. Math. France,
  Paris, 1982.

\bibitem{BrCh}
P.~{Brosnan} and T.~Y. {Chow}.
\newblock {Unit Interval Orders and the Dot Action on the Cohomology of Regular
  Semisimple Hessenberg Varieties}.
\newblock {\em ArXiv e-prints}, November 2015.

\bibitem{BrylinskiRadon}
Jean-Luc Brylinski.
\newblock Transformations canoniques, dualit\'e projective, th\'eorie de
  {L}efschetz, transformations de {F}ourier et sommes trigonom\'etriques.
\newblock {\em Ast\'erisque}, (140-141):3--134, 251, 1986.
\newblock G{\'e}om{\'e}trie et analyse microlocales.

\bibitem{DelDeg}
P.~Deligne.
\newblock Th\'eor\`eme de {L}efschetz et crit\`eres de d\'eg\'en\'erescence de
  suites spectrales.
\newblock {\em Inst. Hautes \'Etudes Sci. Publ. Math.}, (35):259--278, 1968.

\bibitem{DimSing}
Alexandru Dimca.
\newblock On the homology and cohomology of complete intersections with
  isolated singularities.
\newblock {\em Compositio Math.}, 58(3):321--339, 1986.

\bibitem{DolSeg}
I.~{Dolgachev}.
\newblock {Corrado Segre and nodal cubic threefolds}.
\newblock {\em ArXiv e-prints}, January 2015.

\bibitem{Kim}
Sung-Ock Kim.
\newblock Noether-{L}efschetz locus for surfaces.
\newblock {\em Trans. Amer. Math. Soc.}, 324(1):369--384, 1991.

\bibitem{lsv}
R.~{Laza}, G.~{Sacc{\`a}}, and C.~{Voisin}.
\newblock {A hyper-K\"ahler compactification of the Intermediate Jacobian
  fibration associated to a cubic fourfold}.
\newblock {\em ArXiv e-prints}, February 2016.

\bibitem{RapDel}
M.~Rapoport.
\newblock Compl\'ement \`a l'article de {P}. {D}eligne ``{L}a conjecture de
  {W}eil pour les surfaces {$K3$}''.
\newblock {\em Invent. Math.}, 15:227--236, 1972.

\bibitem{MHP}
Morihiko Saito.
\newblock Modules de {H}odge polarisables.
\newblock {\em Publ. Res. Inst. Math. Sci.}, 24(6):849--995 (1989), 1988.

\bibitem{SchRes}
Christian Schnell.
\newblock Residues and filtered {D}-modules.
\newblock {\em Math. Ann.}, 354(2):727--763, 2012.

\bibitem{Voisin2}
Claire Voisin.
\newblock {\em Hodge theory and complex algebraic geometry. {II}}, volume~77 of
  {\em Cambridge Studies in Advanced Mathematics}.
\newblock Cambridge University Press, Cambridge, {E}nglish edition, 2007.
\newblock Translated from the French by Leila Schneps.

\end{thebibliography}
\def\noopsort#1{} \def\cprime{$'$} \def\noopsort#1{} \def\cprime{$'$}

\end{document}